\theoremstyle{plain}
\newtheorem{lemma}{\bf Lemma}
\newtheorem{proposition}{\bf Proposition}
\newtheorem{remark}{Remark}
\newtheorem{theorem}{\bf Theorem}
\newtheorem{theorem*}{Theorem}{}
\numberwithin{equation}{section}
\newcommand\dv{\mathrm{div}}
\newcommand\dm{\mathrm{dm}}
\newcommand\dn{\mathrm{d\mu}}
\begin{document}
\title[Hadamard Type Variation Formulas]{Hadamard Type Variation Formulas for the Eigenvalues of the $\eta$-Laplacian and Applications}
\author{J.N.V. Gomes$^1$, M.A.M. Marrocos$^{2}$}
\author{R.R. Mesquita$^3$}

\address{$^{1,3}$Departamento de Matem\'atica, Instituto de Ci\^encias Exatas, Universidade Federal do Amazonas, Av. General Rodrigo Oct\'avio, 6200, 69080-900 Manaus, Amazonas, Brazil.}
\address{$^2$CMCC-Universidade Federal do ABC, Av. dos Estados, 5001, 09210-580 Santo Andr\'e, S\~ao Paulo, Brazil.}

\email{$^1$jnvgomes@pq.cnpq.br,jnvgomes@gmail.com}
\email{$^2$marcus.marrocos@ufabc.edu.br}
\email{$^3$mesquitaraul532@gmail.com}

\urladdr{$^{1,3}$http://www.mat.ufam.edu.br}
\urladdr{$^2$http://cmcc.ufabc.edu.br}

\thanks{$^1$Partially supported by CNPq-Conselho Nacional de Desenvolvimento Cient\'ifico e Tecnol\'ogico, of
the Ministry of Science, Technology and Innovation of Brazil}
\thanks{$^2$Partially supported by grant 2016/10009-3 São Paulo Research Foundation (FAPESP)}

\keywords{Laplacian, Hadamard type formulas, Eigenvalues}

\subjclass[2010]{Primary 47A05, 47A75; Secondary 35P05, 47A55}

\begin{abstract}
We consider an analytic family of Riemannian metrics on a compact smooth manifold $M$. We assume the Dirichlet boundary condition for the $\eta$-Laplacian, and obtain Hadamard type variation formulas for analytic curves of eigenfunctions and eigenvalues. As an application, we show that for a subset of all $C^r$~Riemannian metrics $\mathcal{M}^r$ on $M$, all eigenvalues of the $\eta$-Laplacian are generically simple, for $2\leq r< \infty$. This implies the existence of a residual set of metrics in $\mathcal{M}^r$, which makes the spectrum of the $\eta$-Laplacian simple. Likewise, we show that there exists a residual set of drifting functions $\eta$ in the space $\mathcal{F}^r$ of all $C^r$~functions on $M$, which makes again the spectrum of the $\eta$-Laplacian simple, for $2\leq r< \infty$. Besides, we give a precise information about the complementary of these residual sets, as well as about the structure of the set of deformations of a Riemannian metric (respectively of the set of deformations of a drifting function) which preserves double eigenvalues. Moreover, we consider a family of perturbations of a domain in a Riemannian manifold, and obtain Hadamard type formulas for the eigenvalues of the $\eta$-Laplacian in this case. We also establish generic properties of eigenvalues in this context.
\end{abstract}
\maketitle

\section{Introduction}
In \cite{berger}~Berger derived variation formulas for the eigenvalues of the Laplace-Beltrami operator with respect to differentiable one-parameter family of Riemannian metrics $g(t)$ on a smooth manifold $M$. Such formulas are known as Hadamard type variation formulas. In a seminal work, Uhlenbeck~\cite{Uhlenbeck} proved important results on generic properties of the eigenvalues and eigenfunctions of the Laplace-Beltrami operator $\Delta_{g}$ on a compact Riemannian manifold $(M,g)$ without boundary. In order to prove her results on genericity of the eigenvalues of $\Delta_g$ she used the Thom transversality theorem.

Here we work in the Dirichlet problem for the $\eta$-Laplacian $L_g:=\Delta_g-g(\nabla\eta,\nabla\cdot)$ on a compact Riemannian manifold $(M,g)$, and our main tools are Hadamard type variation formulas, where the differentiable function $\eta:M\to\Bbb{R}$ is known as drifting function. Such formulas are the optimal device to apply Teytel's approach~\cite{teytel}. The crucial step in the work of Teytel has been to impose a condition which is closely related to the strong Arnold hypothesis~\cite{Arnold} for double eigenvalues, but significantly easier to check. More precisely, let $\mathcal{M}^r$ denote the space of all $C^{r}$~Riemannian metrics on $M$ equipped with the $C^r$ topology, for $2\leq r <\infty$, and let $\Gamma$ be the set of all $g\in\mathcal{M}^r$ such that the eigenvalues of $L_g$ are all simple, so that each $g\in\Gamma$ can be obtained as a generic member of a differentiable family of self-adjoint operators $A(q)$ indexed by a parameter $q\in\mathcal{M}^r$. In this setting, we know a precise information about the complementary of $\Gamma$ as well as about the structure of the set of deformations of a Riemannian metric $g$ which preserves double eigenvalues of $L_g$, see Gomes and Marrocos~\cite{Gomes-Marrocos} for details, or the Teytel's paper for an abstract setting.

The use of Hadamard type formulas is common in the works of some authors. We would like to mention the works of Albert~\cite{A}, El Soufi and Ilias~\cite{ahmad}, Henry~\cite{henry} and Pereira~\cite{pereira}, which are good references in the literature on this topic. More recently, these formulas has been used by two first authors in \cite{Gomes-Marrocos2} to show a density theorem for a class of warping functions which makes the spectrum of the Laplacian a warped-simple spectrum. As an application, they gave a partial answer to a question posed by Zelditch~\cite{zelditch} about the generic situation of multiplicity of the eigenvalues of the Laplacian on principal bundles.

In this paper, we derive generic properties of the eigenvalues of the $\eta$-Laplacian with respect to variation metrics and/or drifting functions. We also work with perturbations of a bounded domain $\Omega$ (given by diffeomorphisms) in a Riemannian manifold $(M,g)$ and establish generic properties of eigenvalues with respect to these perturbations. For this, we consider a family of operators $\eta(t)$-Laplacian where the drifting function $\eta$ depends on the parameter $t$, see equations~\eqref{Lbarra} and \eqref{Eq-ProBar-L}. Besides, we consider a family $\{L_\eta\}$ of $\eta$-Laplacians parameterized by drifting functions $\eta$ in order to obtain analogous results as in~\cite[Sections~5 and 6]{Gomes-Marrocos}, see Theorem~\ref{thm-f}.

Before stating our theorems, we recall that a subset $\Gamma\subset\mathcal{M}^{r}$ is called residual if it contains a countable intersection of open dense sets. The property of metrics in $\Gamma$ is called generic if it holds on a residual subset.

In the following, we assume all manifolds to be oriented and those that are compact are assumed to have a boundary.

\begin{theorem}\label{thm-A}
Given a compact smooth manifold $M^n$, $n\geq2$, there exists a residual subset $\Gamma\subset\mathcal{M}^{r}$, $2\leq r< \infty$, such that for all $g\in \Gamma$ the eigenvalues of the Dirichlet problem for the $\eta$-Laplacian $L_g$ are simple.
\end{theorem}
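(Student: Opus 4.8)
The plan is to exhibit $\Gamma$ as a countable intersection of open dense subsets and to invoke Baire's theorem, since $\mathcal M^{r}$ is a complete metric space. For each $N\in\mathbb N$ let $\Gamma_N\subset\mathcal M^{r}$ be the set of metrics $g$ for which the first $N$ Dirichlet eigenvalues $\lambda_1(g)\le\lambda_2(g)\le\cdots\le\lambda_N(g)$ of $L_g$ are pairwise distinct. Then $\bigcap_N\Gamma_N$ is exactly the set of metrics all of whose eigenvalues are simple, so it suffices to show that each $\Gamma_N$ is open and dense. Openness is the easy part: the eigenvalue functions $g\mapsto\lambda_i(g)$ are continuous on $\mathcal M^{r}$, because the Rayleigh quotient $\int_M|\nabla_g u|_g^{2}\,\dm\big/\int_M u^{2}\,\dm$ attached to $L_g$ and to the fixed weight $\dm=e^{-\eta}\,dV_g$ varies continuously with $g$; hence a metric having a spectral gap among its first $N$ eigenvalues has a whole neighborhood inside $\Gamma_N$.

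The substance is the density of $\Gamma_N$. Fix $g_0\in\mathcal M^{r}$ and a neighborhood $\mathcal U$ of it, and suppose some eigenvalue $\lambda$ occurring among $\lambda_1(g_0),\dots,\lambda_N(g_0)$ has multiplicity $m\ge 2$, with $\dm$‑orthonormal eigenfunctions $\varphi_1,\dots,\varphi_m$ (as regular as elliptic theory permits, in particular $C^{2}$, and vanishing on $\partial M$). For a smooth symmetric $2$‑tensor $h$ consider the analytic family $g(t)=g_0+th$; by the analytic perturbation theory developed above (equivalently, by Kato--Rellich), the $m$ eigenvalue branches issuing from $\lambda$ are real‑analytic in $t$, and their derivatives at $t=0$ are the eigenvalues of the symmetric $m\times m$ matrix $Q(h)=\big(q_{ab}(h)\big)$, where, writing $\tau_{ab}=\tfrac12\big(d\varphi_a\otimes d\varphi_b+d\varphi_b\otimes d\varphi_a\big)-\tfrac12\big(\langle\nabla\varphi_a,\nabla\varphi_b\rangle-\lambda\,\varphi_a\varphi_b\big)g_0$, the Hadamard type formula of Proposition~\ref{pro bar-L} together with Lemma~\ref{lem2} gives (no boundary term appears, since $\eta$ is held fixed here and the $\varphi_a$ satisfy the Dirichlet condition)
\[
q_{ab}(h)=-\int_M\langle h,\tau_{ab}\rangle_{g_0}\,\dm .
\]
If we can choose $h$ so that $Q(h)$ has two distinct eigenvalues, then along $g(t)$ the eigenvalue $\lambda$ splits for small $t\ne0$, so the integer $\sum_{i=1}^{N}\big(\dim\ker(L_{g(t)}-\lambda_i(g(t)))-1\big)$ strictly decreases. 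Iterating this finitely many times, each step performed with $t$ small enough to stay inside $\mathcal U$ and to keep the already‑separated eigenvalues apart (possible by the same continuity used for openness), one reaches a metric in $\mathcal U\cap\Gamma_N$.

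Everything therefore reduces to the following claim, which I expect to be the main obstacle: the linear map $h\mapsto Q(h)$, from smooth symmetric $2$‑tensors to $m\times m$ symmetric matrices, never has image contained in $\mathbb R\cdot\mathrm{Id}$. If it did, then testing against tensors $h$ supported in small coordinate balls and using that the pairing $(h,\sigma)\mapsto\int_M\langle h,\sigma\rangle_{g_0}\,\dm$ is nondegenerate on symmetric $2$‑tensors (the density of $\dm$ being smooth and positive), we would obtain $\tau_{ab}\equiv0$ for $a\ne b$ and $\tau_{aa}\equiv\tau_{bb}$ for all $a,b$. Taking traces and exploiting the explicit form of $\tau$, the first relation forces the products $\varphi_a\varphi_b$ (as well as $\langle\nabla\varphi_a,\nabla\varphi_b\rangle$) to vanish identically, and the second forces $\varphi_a^{2}\equiv\varphi_b^{2}$ and $|\nabla\varphi_a|^{2}\equiv|\nabla\varphi_b|^{2}$. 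Both conclusions are impossible: by Aronszajn's unique continuation theorem a nontrivial solution of the second order elliptic equation $L_{g_0}u=\lambda u$ cannot vanish on a nonempty open set, and two linearly independent such solutions cannot satisfy $\varphi_a=\pm\varphi_b$ on an open set; since $M$ is connected and is covered by the two closed sets on which the corresponding relations would hold, one of those sets must have nonempty interior, a contradiction.

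The weight $e^{-\eta}$ enters nowhere in this unique continuation step, and the boundary is innocuous because the Dirichlet eigenfunctions vanish there; so the passage from Berger's classical setting to the $\eta$‑Laplacian is transparent. This establishes the claim, hence the density of each $\Gamma_N$, and with the openness already noted this finishes the proof that $\Gamma=\bigcap_N\Gamma_N$ is residual.
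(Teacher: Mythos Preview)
Your proof is correct and follows essentially the same route as the paper: a Baire-category argument in which openness of each $\Gamma_N$ comes from continuity of the eigenvalues and density comes from the Hadamard formula of Proposition~\ref{pro bar-L}, reducing matters to showing that the off-diagonal tensors $\tfrac14 L(\phi_i\phi_j)\,g - S$ cannot vanish identically for $i\ne j$. The only difference is in how the final contradiction is reached when $n\ge 3$: the paper integrates the relation $\langle\nabla\phi_i,\nabla\phi_j\rangle=\tfrac{n\lambda}{n-2}\phi_i\phi_j$ along the gradient flow of $\phi_i$, whereas you (implicitly, via ``the explicit form of $\tau$'') use that the symmetrized tensor $S$ has rank at most $2<n$ to force $\varphi_a\varphi_b\equiv 0$ pointwise and then invoke unique continuation---you should make that rank step explicit, since the trace alone does not give $\varphi_a\varphi_b\equiv 0$ when $n\ge 3$.
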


Let $\Omega$ be a bounded domain in a Riemannian manifold $(M, g)$, and $D^r(\Omega)$ (with the fixed $C^r$~topology, $2\leq r<\infty$) be the set of all $f:\Omega\to M$ which are $C^r$~diffeomorphisms from $\Omega$ to $f(\Omega)$. It is known that this set is an affine manifold of a Banach space (see~\cite{delfour}). Then, we show that the following property is generic:

\begin{theorem}\label{thm-B}
Given a Riemannian manifold $(M^n,g)$, $n\geq2$, and define $\mathfrak{D}\subset D^r(\Omega)$, $2\leq r< \infty$, to be the subset of $f:\Omega\to (M,g)$ such that all eigenvalues of the $\eta$-Laplacian $L_g$ on $C_c^{\infty}(f(\Omega))$ (with Dirichlet boundary condition on $f(\Omega)$) are simple. Then $\mathfrak{D}$ is residual.
\end{theorem}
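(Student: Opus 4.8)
The plan is to transfer the problem from perturbations of the domain $\Omega$ to perturbations of the metric $g$, so that Theorem~\ref{thm-A} (or rather the Hadamard-type formula from Proposition~\ref{pro difeo} that underlies it) can be applied directly. Given $f\in D^r(\Omega)$ and the $\eta$-Laplacian $L_g$ acting on $C_0^\infty(f(\Omega))$, the pullback by $f$ conjugates $L_g$ on $f(\Omega)$ to an operator on the fixed domain $\Omega$; this operator is precisely an $\eta(f)$-Laplacian $L_{f^*g}$ associated with the pulled-back metric $f^*g$ and the pulled-back weight $\eta\circ f$. Crucially, conjugation preserves the spectrum, so the eigenvalues of the Dirichlet problem for $L_g$ on $f(\Omega)$ coincide with those of $L_{f^*g}$ on $\Omega$, with the same multiplicities. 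Thus $\mathfrak{D}$ corresponds under $f\mapsto f^*g$ to the set of (metric, weight) pairs on the fixed domain $\Omega$ for which all Dirichlet eigenvalues are simple, and this is exactly the setting handled by \eqref{Lbarra} and Proposition~\ref{pro difeo}.

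First I would set up the analytic family: given any $f_0\in D^r(\Omega)$, take a real-analytic curve $f_t$ in $D^r(\Omega)$ through $f_0$ (generated, say, by flowing along a vector field), and note that $t\mapsto f_t^*g$ is an analytic curve of $C^{r-1}$ metrics on $\Omega$ while $t\mapsto \eta\circ f_t$ is an analytic curve of weights; this is where the modified operator $\eta(t)$-Laplacian of \eqref{Lbarra} enters. By the analytic perturbation results established earlier (existence of analytic curves of eigenvalues and eigenfunctions under the Dirichlet condition), along such a curve a repeated eigenvalue either stays permanently repeated or splits, and Proposition~\ref{pro difeo} gives the derivative of each eigenvalue branch as an integral over $\Omega$ (equivalently over $f_t(\Omega)$) of an explicit quadratic expression in the eigenfunctions against the deformation tensor. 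Second, I would use this formula to show that the set where a given eigenvalue has multiplicity $\geq 2$ is, locally, contained in a proper closed analytic subset: if two independent eigenfunctions $u,v$ share the eigenvalue along an open set of deformations, then the Hadamard formula forces a pointwise relation (of the type $|\nabla u|^2=|\nabla v|^2$ and $\nabla u\cdot\nabla v=0$ on $f(\Omega)$, after using that the deformation field can be chosen essentially arbitrarily), and a unique-continuation / nodal-set argument then yields a contradiction, exactly as in the metric case behind Theorem~\ref{thm-A}.

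Third, I would package this into a genericity statement via a Baire-category argument adapted to the affine Banach manifold $D^r(\Omega)$. For each pair of positive integers $(k,m)$, let $\mathfrak{D}_{k,m}\subset D^r(\Omega)$ be the set of $f$ such that the first $k$ eigenvalues of $L_g$ on $f(\Omega)$ are simple and mutually separated by at least $1/m$; continuity of the eigenvalues in $f$ (again via the conjugation to $L_{f^*g}$ and continuous dependence of the spectrum on the metric) shows $\mathfrak{D}_{k,m}$ is open, and the local analytic-splitting argument of the previous paragraph shows it is dense. Then $\mathfrak{D}=\bigcap_{k,m}\mathfrak{D}_{k,m}$ is a countable intersection of open dense sets, hence residual. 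The main obstacle I anticipate is the second step: verifying that the deformation tensors $f_t^*g$ (together with the coupled variation of the weight $\eta\circ f_t$) span a large enough space of symmetric $2$-tensors near the boundary and interior of $f(\Omega)$ to make the Hadamard formula force the pointwise eigenfunction relations — this is precisely why the authors remark that the $t$-dependent modification \eqref{Lbarra} is needed, and getting the boundary terms and the coupling between metric and weight variations to cooperate is the delicate point. Once that transversality-type input is in hand, the unique-continuation contradiction and the Baire argument are routine.
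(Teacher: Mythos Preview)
Your high-level scaffolding (pullback to the fixed domain via $f\mapsto f^*g$ and $\eta\mapsto\eta\circ f$, analytic branches of eigenvalues, Baire category) is the same as the paper's. The genuine gap is in the splitting step, where you misidentify both the content of Proposition~\ref{pro difeo} and the mechanism of the contradiction.

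You describe the derivative formula from Proposition~\ref{pro difeo} as ``an integral over $\Omega$ of an explicit quadratic expression in the eigenfunctions against the deformation tensor,'' and then try to conclude interior pointwise relations such as $|\nabla u|^2=|\nabla v|^2$ and $\langle\nabla u,\nabla v\rangle=0$ by arguing that the deformation tensors $\tfrac{d}{dt}f_t^*g=\mathcal{L}_V g$ span a large enough space of symmetric $2$-tensors. They do not: Lie derivatives of $g$ form a proper (infinite-codimension) subspace of symmetric $2$-tensors, so this route cannot force pointwise interior relations. The obstacle you flag at the end is in fact fatal to the approach as you have framed it.

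The point of Proposition~\ref{pro difeo} is precisely that, after the cancellations (which use the coupled variation $\dot\eta=\langle\nabla\eta,V\rangle$), the interior integral disappears and one is left with a \emph{boundary} formula,
\[
(\lambda_i+\lambda_j)'\delta_{ij}=-2\int_{\partial\Omega}\langle V,\nu\rangle\,\frac{\partial\phi_i}{\partial\nu}\,\frac{\partial\phi_j}{\partial\nu}\,\dn.
\]
Here only the normal component $\langle V,\nu\rangle$ enters, and that \emph{can} be prescribed as an arbitrary smooth function on $\partial\Omega$. Hence, if no perturbation splits the eigenvalue, one gets $\frac{\partial\phi_i}{\partial\nu}\frac{\partial\phi_j}{\partial\nu}\equiv 0$ on $\partial\Omega$, so on some open $U\subset\partial\Omega$ one of the eigenfunctions has vanishing Cauchy data ($\phi_i=0$ and $\partial_\nu\phi_i=0$); unique continuation then forces $\phi_i\equiv 0$, the desired contradiction. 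With this corrected splitting argument (which is the content of Proposition~\ref{prop-B}), your Baire-category packaging is fine and matches the paper's; note also that the paper observes analyticity of the family is not needed here, only differentiability, since the formula \eqref{eq-prop3} already suffices.
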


Let $\mathcal{F}^r$ (with the fixed $C^r$~topology, $2\leq r<\infty$) be the set of all $C^r$ drifting functions $\eta$, and let us use the notation $L_\eta:=\Delta_g-g(\nabla\eta,\nabla\cdot)$ to emphasize that the parameter is $\eta.$

\begin{theorem}\label{thm-f-A}
Given a compact Riemannian manifold $(M^n,g)$, $n\geq2$, there exists a residual subset $\mathcal{E}\subset\mathcal{F}^{r}$, $2\leq r< \infty$, such that for all $\eta\in\mathcal{E}$, the eigenvalues of the Dirichlet problem for $L_\eta$ are simple.
\end{theorem}

Now, we discuss an interesting case of the spectrum of $L_\eta$ which stems from work of Teytel~\cite{teytel}.

\begin{theorem} \label{thm-f}
Let $(M^n,g)$, $n\geq2$, be a compact Riemannian manifold, and let $\mathcal{E}\subset\mathcal{F}^{r}$, $2\leq r< \infty$, be a residual subset such that for all $\eta\in\mathcal{E}$, the eigenvalues of the Dirichlet problem for $L_\eta$ are simple.
\begin{enumerate}
\item The set $\mathcal{F}^r\backslash\mathcal{E}$ has meager codimension $2$ in $\mathcal{F}^r.$
\item Take $\eta_0\in\mathcal{F}^r$, and let $\lambda$ be an eigenvalue of the operator $L_{\eta_0}$ of multiplicity $2$. Then, in a neighborhood of $\eta_0$, the set of all $\eta\in\mathcal{F}^r$ such that $L_\eta$ admits an eigenvalue $\lambda(\eta)$ of multiplicity $2$ near $\lambda$ form a submanifold of meager codimension $2$ in $\mathcal{F}^{r}$.
\item Consider the same set up as Item $(2)$. Then, in a neighborhood of $\eta_0$, the set of all $\eta\in\mathcal{F}^r$ which preserves double eigenvalues, i.e., $L_\eta$ admits an eigenvalue $\lambda(\eta)$ of multiplicity $2$ such that $\lambda(\eta)=\lambda(\eta_0)$, form a submanifold of meager codimension $3$ in $\mathcal{F}^{r}$.
\end{enumerate}
\end{theorem}

\begin{remark}
It is important to observe that Theorem~\ref{thm-f} is also true in the context of the family of bounded domains in a Riemannian manifold. The proof follows the same steps as in the proof of Theorem~\ref{thm-f}.
\end{remark}

\section{Preliminaries}
Let us consider an oriented compact Riemannian manifold $(M,g)$ with boundary $\partial M$ and volume form $dV$. It is endowed with a weighted measure of the form $\dm=e^{-\eta}dV$, where $\eta:M\to\mathbb{R}$ is a differentiable function. Let $\mathrm{d\nu}$ be the volume form induced on $\partial M$ and $\dn=e^{-\eta}\mathrm{d\nu}$ be the corresponding weighted measure on $\partial M$. We define the $\eta$-Laplacian by $L_{g}=\Delta_{g}-g(\nabla\eta,\nabla \cdot)$ which is essentially self-adjoint on $C_c^{\infty}(M)$. Observe that this allows us to use perturbation theory for linear operators \cite{Kato}. To do this, we consider the set $\mathcal{M}^r$ of all $\mathcal{C}^r$ Riemannian metrics on $M$. Then every $g\in\mathcal{M}^r$ determines the sequence $0=\mu_{0}(g)<\mu_{1}(g)\leq\mu_{2}(g)\leq\cdots\leq\mu_{k}(g)\leq\cdots$ of the eigenvalues of $L_{g}$ counted with their multiplicities. We regard each eigenvalue $\mu_{k}(g)$ as a function of $g$ in $\mathcal{M}^r$. Note that, in general, the functions $g\mapsto\mu_k(g)$ are continuous but not differentiable (see~\cite{Kato}). They are differentiable when $\mu_k$ is simple. With these notations, the divergence theorem remains valid under the form $\int_{M}L_gf\dm=\int_{\partial M}g(\nabla f,\nu)\dn$. Thus, the integration by parts formula is given by
\begin{equation}\label{int_pp}
\int_{M}\ell L_gf\dm=-\int_{M}g(\nabla\ell,\nabla f)\dm+\int_{\partial M}\ell g(\nabla f,\nu)\dn
\end{equation}
for all $f,\ell\in C^{\infty}(M).$

The inner product induced by $g$ on the space of $(0,2)$--tensors on $M$ is given by $\langle T,S\rangle=\mathrm{tr}\big(TS^{*}\big)$, where $S^*$ denotes the adjoint tensor of $S$. Clearly, we get in local coordinates
\begin{equation*}
\langle T,S\rangle = \sum_{i,j,k,l}g^{ik}g^{jl}T_{ij}S_{kl}.
\end{equation*}
Furthermore, we have $\Delta_gf=\langle\nabla^2f,g\rangle$ where $\nabla^2f=\nabla df$ is the Hessian of $f$. We also recall that each $(0,2)$--tensor $T$ on $(M,g)$ can be associated to a unique $(1,1)$--tensor by
$g(T(Z),Y) := T(Z,Y)$ for all $Y,Z\in\mathfrak{X}(M)$. We shall slightly abuse notation here and will also write $T$ for this $(1,1)$--tensor. So, we consider the $(0,1)$--tensor given by
\begin{equation*}
(\dv T)(v)(p) = \mathrm{tr}\big(w \mapsto (\nabla_w T)(v)(p)\big),
\end{equation*}
where $p\in M$ and $v\in T_pM.$ Moreover, we can define a $(0,1)$--tensor $\dv_{\eta}T$ putting $\dv_{\eta}T:=\dv T-d\eta\circ T.$

Before proving our main results, we present the following one.

\begin{lemma}\label{lem1}
Let $T$ be a symmetric $(0,2)$--tensor on a Riemannian manifold $(M,g)$. Then
\begin{equation}
\label{div}\dv_{\eta}(T(\varphi Z))= \varphi\langle \dv_{\eta}T,Z\rangle+ \varphi\langle \nabla Z, T\rangle + T(\nabla\varphi ,Z),
\end{equation}
for each $Z\in\mathfrak{X}(M)$ and any differentiable function $\varphi$ on $M$.
\end{lemma}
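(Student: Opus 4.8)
The plan is to unwind the definition of $div_\eta$ and apply the product (Leibniz) rule for the connection $\nabla$, then collect terms. By definition, $div_\eta(T(\varphi Z)) = \operatorname{div}\big(T(\varphi Z)\big) - d\eta\big(T(\varphi Z)\big)$, where in the first summand $T$ is viewed as the associated $(1,1)$-tensor, so $T(\varphi Z) = \varphi\,T(Z)$ is a vector field. I would start by computing $\operatorname{div}(\varphi\,T(Z))$. Using the standard identity $\operatorname{div}(\varphi X) = \varphi\operatorname{div}X + g(\nabla\varphi, X)$ for a vector field $X$, this gives $\operatorname{div}(\varphi\,T(Z)) = \varphi\operatorname{div}(T(Z)) + g(\nabla\varphi, T(Z))$. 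The second term is exactly $T(\nabla\varphi, Z)$ by the defining relation $g(T(W),Y) = T(W,Y)$ and the symmetry of $T$. For the first term, I would expand $\operatorname{div}(T(Z)) = \operatorname{tr}(W \mapsto \nabla_W(T(Z)))$ and use that $\nabla_W(T(Z)) = (\nabla_W T)(Z) + T(\nabla_W Z)$; taking the trace over $W$ yields $\operatorname{div}(T(Z)) = (\operatorname{div}T)(Z) + \langle \nabla Z, T\rangle$, where the last term is the trace of $W \mapsto T(\nabla_W Z)$, which in the $(0,2)$ picture is precisely $\langle \nabla Z, T\rangle$ (here one uses symmetry of $T$ to match the convention for the inner product on $(0,2)$-tensors).

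Next I would handle the $d\eta$ correction. We have $d\eta(T(\varphi Z)) = \varphi\,d\eta(T(Z)) = \varphi\,(d\eta\circ T)(Z)$. Assembling the pieces:
\begin{equation*}
div_\eta(T(\varphi Z)) = \varphi\big[(\operatorname{div}T)(Z) - (d\eta\circ T)(Z)\big] + \varphi\langle\nabla Z, T\rangle + T(\nabla\varphi, Z),
\end{equation*}
and the bracket is by definition $(\operatorname{div}_\eta T)(Z) = \langle \operatorname{div}_\eta T, Z\rangle$, which is the claimed formula \eqref{div}.

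The computation is essentially routine; the only points requiring care are bookkeeping ones. First, one must be consistent about passing between the $(0,2)$-tensor $T$ and its associated $(1,1)$-tensor, since the statement of \eqref{div} mixes both (e.g. $T(\varphi Z)$ on the left is the vector field $\varphi T(Z)$, while $T(\nabla\varphi, Z)$ on the right is the $(0,2)$ evaluation). Second, one must verify that the trace of $W \mapsto T(\nabla_W Z)$ really equals $\langle \nabla Z, T\rangle$ with the sign and index conventions fixed earlier in the paper; writing it once in local coordinates, $\operatorname{tr}(W \mapsto T(\nabla_W Z)) = \sum_{i,j} g^{ij} T(\partial_i, \nabla_{\partial_j} Z) = \sum g^{ij}g^{kl} T_{ik} (\nabla Z)_{jl} = \langle \nabla Z, T\rangle$, using symmetry of $T$, settles it. I expect no genuine obstacle — the lemma is a formal Leibniz-rule identity, and the main ``difficulty'' is purely notational consistency with the conventions set up in the Preliminaries.
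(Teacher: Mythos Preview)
Your proposal is correct and follows essentially the same approach as the paper: both arguments pull out $\varphi$ via the product rule for the divergence, then expand $\operatorname{div}(T(Z))$ using the Leibniz rule $\nabla_W(T(Z)) = (\nabla_W T)(Z) + T(\nabla_W Z)$ and identify the resulting trace as $\langle \nabla Z, T\rangle$. The only cosmetic difference is that the paper carries the $d\eta$ term along inside $div_\eta$ and computes with a local orthonormal frame, whereas you separate the $d\eta$ correction and verify the trace identity in local coordinates at the end.
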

\begin{proof} Let $\{e_1,\ldots,e_n\}$ be a local orthonormal frame on $(M,g)$. Using the properties of $\dv_{\eta}$ and the symmetry of $T$, for each $Z\in\mathfrak{X}(M)$ and any differentiable function $\varphi$ on $M,$ we compute
\begin{eqnarray*}
\dv_{\eta} (T(\varphi Z))&=& \varphi \dv_{\eta}(T(Z))+ g(\,\nabla\varphi,T(Z)\,)\\
&=&\varphi(\dv T)(Z)+ \varphi\sum_{i} g(T(\nabla_{e_i}Z) , e_i) -\varphi g(\nabla\eta,T(Z)) + T(\nabla\varphi,Z)\\
&=&\varphi(\dv_{\eta} T)(Z)+ \varphi\langle \nabla Z, T\rangle + T(\nabla\varphi,Z).
\end{eqnarray*}
To complete the proof is sufficient to use the duality $(\dv_{\eta} T)(Z)=\langle \dv_{\eta} T, Z\rangle$.
\end{proof}

Let us observe that for every $X\in\mathfrak{X}(M)$ the operator $\dv_{\eta}X=\dv X-g(\nabla\eta,X)$ has the same properties of the operator $\dv X$ as well as is valid
$\int_M\dv_{\eta}X\dm=\int_{\partial M}g(X,\nu)\dn.$

\section{Hadamard Type Variation Formulas}
In this section, we consider a differentiable variation $g(t)$  of the metric $g$, so that $(M,g(t), dm_t)$ is a Riemannian manifold with a differentiable measure. Denoting by $H$ the $(0,2)$--tensor given by $H_{ij}=\frac{d}{dt}\big|_{t=0}g_{ij}(t)$ and writing $h=\langle H,g\rangle$, we easily get $\frac{d}{dt}\big|_{t=0}\dm_t=\frac{1}{2}h\dm$. From now on we shall write in local coordinates $f_i=\partial_i f$. We first prove the following lemma.

\begin{lemma}\label{lem2}
Let $(M,g)$ be a Riemannian manifold and $g(t)$ be a differentiable variation of the metric $g$. Then, for all $f\in C_c^{\infty}(M)$, we have
\begin{equation}
L'f=\langle\frac{1}{2}dh-\dv_{\eta}H,df\rangle-\langle H,\nabla^2f\rangle,
\end{equation}
where $L':=\frac{d}{dt}\big|_{t=0}L_{g(t)}.$
\end{lemma}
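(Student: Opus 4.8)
The plan is to compute $L'f = \frac{d}{dt}\big|_{t=0}\big(\Delta_{g(t)}f - g(t)(\nabla^{g(t)}\eta, \nabla^{g(t)}f)\big)$ by handling the two terms separately and then combining. Write $L_{g(t)}f = \Delta_{g(t)}f - g(t)(\nabla^{g(t)}\eta,\nabla^{g(t)}f)$. The first term is the classical Laplace-Beltrami operator, for which Berger's formula gives $(\Delta_{g(t)}f)' = \langle \tfrac12 dh - \mathrm{div}\, H, df\rangle - \langle H, \nabla^2 f\rangle$; I would recall this (or rederive it quickly from $\Delta_g f = \langle \nabla^2 f, g\rangle$ together with the variation of the Christoffel symbols) since it is the $\eta=0$ case of what we want. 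So it remains to differentiate the drift term $g(t)(\nabla^{g(t)}\eta,\nabla^{g(t)}f)$.

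For the drift term, the key observation is that $g(\nabla\eta,\nabla f) = g^{ij}\eta_i f_j$ in local coordinates, and since $\eta$ and $f$ are fixed functions (independent of $t$), only the inverse metric $g^{ij}$ varies. Using $\frac{d}{dt}\big|_{t=0} g^{ij} = -g^{ik}g^{jl}H_{kl}$, we get
\begin{equation*}
\frac{d}{dt}\Big|_{t=0} g(t)(\nabla^{g(t)}\eta,\nabla^{g(t)}f) = -g^{ik}g^{jl}H_{kl}\eta_i f_j = -H(\nabla\eta,\nabla f) = -d\eta\big(H(\nabla f)\big),
\end{equation*}
recalling the convention $g(H(Z),Y) = H(Z,Y)$. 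Therefore
\begin{equation*}
L'f = \langle \tfrac12 dh - \mathrm{div}\, H, df\rangle - \langle H, \nabla^2 f\rangle + d\eta\big(H(\nabla f)\big) = \langle \tfrac12 dh - \mathrm{div}\, H + d\eta\circ H, df\rangle - \langle H, \nabla^2 f\rangle.
\end{equation*}
Since $\mathrm{div}_\eta H = \mathrm{div}\, H - d\eta\circ H$ by definition, we have $-\mathrm{div}\, H + d\eta\circ H = -\mathrm{div}_\eta H$, which yields exactly $L'f = \langle \tfrac12 dh - \mathrm{div}_\eta H, df\rangle - \langle H, \nabla^2 f\rangle$, as claimed.

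The main obstacle is really just assembling the classical Berger formula for $(\Delta_{g(t)}f)'$ cleanly; everything involving $\eta$ is a one-line computation because the drift term depends on $t$ only through the inverse metric. If one prefers a self-contained derivation avoiding a direct citation of Berger, I would instead write $L_g f = \langle \nabla^2 f, g\rangle - d\eta(\nabla f)$ is not quite right; rather I would use $L_g f = e^{\eta}\,\mathrm{div}(e^{-\eta}\nabla f)$ and differentiate that expression, but the coordinate approach above is the shortest. An alternative bookkeeping that makes the $\eta$-term appear naturally from the start is to combine $-\langle H,\nabla^2 f\rangle$ with the variation of $g^{ij}$ appearing in $\Delta_g f = g^{ij}(f_{ij} - \Gamma^k_{ij}f_k)$, then collect all first-order-in-$df$ contributions and recognize $\tfrac12 dh - \mathrm{div}_\eta H$; Lemma \ref{lem1} is the identity that lets one rewrite the resulting divergence-type expression compactly, and I expect the authors to invoke it when integrating this formula against test functions in the subsequent Hadamard formula rather than in the proof of this lemma itself.
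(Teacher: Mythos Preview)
Your argument is correct, but the paper takes a different route. Rather than quoting Berger's formula for $(\Delta_{g(t)}f)'$ and appending the drift correction, the authors work weakly from the start: they differentiate the identity $\int_M \ell\, L_{g(t)}f\,\dm_t = -\int_M \langle d\ell,df\rangle\,\dm_t$ at $t=0$, using $\frac{d}{dt}\big|_{t=0}\langle df,d\ell\rangle = -H(\nabla f,\nabla\ell)$ and $\frac{d}{dt}\big|_{t=0}\dm_t = \tfrac12 h\,\dm$. They then apply Lemma~\ref{lem1} (with $T=H$, $\varphi=\ell$, $Z=\nabla f$) together with the identity $div_\eta(\ell h\nabla f) = \ell h Lf + \ell\langle dh,df\rangle + h\langle d\ell,df\rangle$ to eliminate the boundary-free divergence terms and read off the pointwise formula. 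So contrary to your expectation, Lemma~\ref{lem1} is invoked \emph{in} this proof, not deferred to the Hadamard formula. The payoff of the paper's approach is that it is self-contained (no appeal to Berger) and the $\eta$-divergence appears organically from the weighted integration by parts; the payoff of your approach is brevity and modularity, since the drift term depends on $t$ only through $g^{ij}$ and contributes exactly the $d\eta\circ H$ needed to turn $\mathrm{div}\,H$ into $\mathrm{div}_\eta H$.
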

\begin{proof}
Indeed, since $\langle df,d\ell\rangle=g^{ij}(t)f_i\ell_j,$ for any $\ell\in C_c^{\infty}(M),$ and $\frac{d}{dt}\big|_{t=0}g^{ij}(t)=-g^{ik}g^{js}H_{ks}$, we have
\begin{equation}\label{eq1lem1}
\frac{d}{dt}\Big|_{t=0}\langle df,d\ell\rangle=-g^{ik}g^{js}H_{ks}f_i\ell_j=-H(g^{ik}f_i\partial_k , g^{js}\ell_j\partial_s)=-H(\nabla f,\nabla \ell).
\end{equation}
From integration by parts formula, we get
\begin{equation*}
\int_{M}\ell L_{g(t)}f\dm_t=-\int_{M}\langle d\ell,df\rangle \dm_t.
\end{equation*}
Hence, from equation \eqref{eq1lem1}, we have at $t=0$
\begin{equation}\label{eq2lem1}
\int_M\ell L'f\dm + \frac{1}{2}\int_M\ell hLf\dm=\int_MH(\nabla f,\nabla \ell)\dm - \frac{1}{2}\int_Mh\langle d\ell,df\rangle \dm.
\end{equation}
Applying Lemma~\ref{lem1} for $T=H$, $\varphi=\ell$ and $Z=\nabla f$ we have
\begin{equation}\label{eq3lem1}
\dv_{\eta}(H(\ell\nabla f))=\ell\langle \dv_{\eta}H,df\rangle+\ell\langle H,\nabla^2f\rangle+H(\nabla f,\nabla \ell).
\end{equation}
Moreover,
\begin{equation}\label{eq4lem1}
\dv_{\eta}(\ell h\nabla f)=\ell hLf+\ell\langle dh,df\rangle+h\langle d\ell,df\rangle.
\end{equation}
Hence, plugging \eqref{eq3lem1} and \eqref{eq4lem1} into \eqref{eq2lem1}, we obtain
\begin{equation}
\int_{M}\ell L'f \dm=\int_M \ell\Big(\frac{1}{2}\langle dh,df\rangle- \langle \dv_{\eta}H,df\rangle-\langle H,\nabla^2f\rangle\Big)\dm,
\end{equation}
which concludes the proof of the lemma.
\end{proof}

Next, we consider a differentiable function $\eta:I\times M\to\Bbb{R}$ and  write for simplicity  $\dot{\eta}=\frac{d}{dt}\big|_{t=0}\eta(t)$. For all $f\in C^{\infty}(M)$ we define
\begin{equation}\label{Lbarra}
\bar{L}_{t}f:=\Delta_{t}f - g(t)(\nabla\eta(t),\nabla f).
\end{equation}
Thus,
\begin{eqnarray}
\nonumber\frac{d}{dt}\Big|_{t=0}\bar{L}_{t}f &=& \Delta' f - \frac{d}{dt}\Big|_{t=0}g^{ij}(t)\eta_{i}(t)f_{j}\\
\nonumber&=&\Delta' f -\Big(\frac{d}{dt}\Big|_{t=0}g^{ij}(t)\Big)\eta_{i}f_{j}- g^{ij}\partial_i\frac{d}{dt}\Big|_{t=0}\eta(t) f_j\\
\label{AAS}&=& L'f - \langle\nabla\dot{\eta},\nabla f\rangle.
\end{eqnarray}

The next result extends Lemma~$3.15$ of Berger~\cite{berger} for the $\eta$-Laplacian. Firstly, we note that given an eigenvalue $\lambda(g_0)$ of $\bar{L}_{g_0}$ with multiplicity $m(\lambda(g_0))$, there are a positive number $\epsilon_{\lambda(g_0), g_0}$ and a neighborhood $\mathcal{V}_{\epsilon}$ in $\mathcal{M}^r$, $2\leq r<\infty$, such that for all $g\in \mathcal{V}_{\epsilon}$ one has
\begin{equation}\label{kato-continuity}
\sum_{\{|\lambda-\lambda(g_0)|<\epsilon_{\lambda(g_0), g_0}\}\cap spec(\bar{L}_g)}m(\lambda)=m(\lambda(g_0)).
\end{equation}
Indeed, Eq.~\eqref{kato-continuity} is a consequence of the continuity of a finite system of eigenvalues, see \cite[Section 5, Chapter 4]{Kato}. In this setting, we prove the following generic result.
\begin{proposition}\label{PropExist}
Let $(M,g)$ be a compact Riemannian manifold. Consider a real analytic one-parameter family of Riemannian metrics $g(t)$  on $M$ with $g=g(0).$ If $\lambda$ is an eigenvalue of multiplicity $m>1$ for the $\eta$-Laplacian $L_{g}$, then there exists $\varepsilon>0$, and there exist scalars $\lambda_{i}(t)$ ($i=1,\dots,m$) and functions $\phi_{i}(t)$  varying analytically in $t$, such that, for all $|t|<\varepsilon$ the following relations hold:
\begin{enumerate}
\item $L_{g(t)}\phi_{i}(t)=\lambda_{i}(t)\phi_{i}(t)$;
\item $\lambda_{i}(0)=\lambda$;
\item $\{\phi_{i}(t)\}$ is orthonormal in $L^2(M,\dm_t)$.
\end{enumerate}
\end{proposition}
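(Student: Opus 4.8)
The plan is to apply analytic perturbation theory for self-adjoint operators in the sense of Kato (\cite{Kato}), the main subtlety being that the operators $L_{g(t)}$ act on the fixed manifold $M$ but are self-adjoint with respect to the $t$-dependent inner product of $L^2(M,\dm_t)$. First I would remove this dependence by a unitary transformation: writing $\dm_t = \rho(t)\,\dm$ with $\rho(t)>0$ real analytic in $t$ (this uses that $g(t)$, hence $dV_t$, and $e^{-\eta}$ vary analytically, and $\rho(0)=1$), the multiplication operator $U_t f = \rho(t)^{1/2} f$ is a real analytic family of unitary isomorphisms $L^2(M,\dm_t)\to L^2(M,\dm)$. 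Then $\widetilde{L}_t := U_t L_{g(t)} U_t^{-1}$ is a family of operators on the \emph{fixed} Hilbert space $\mathcal H := L^2(M,\dm)$, each essentially self-adjoint on $C_0^\infty(M)$ with the Dirichlet condition, and with common domain $H^2\cap H^1_0$.

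The second step is to verify that $t\mapsto \widetilde{L}_t$ is a real analytic family of type (A) in Kato's terminology: the domain is $t$-independent, and for each fixed $f$ in that domain the map $t\mapsto \widetilde{L}_t f \in\mathcal H$ is real analytic. This reduces to checking that the coefficients of $\widetilde{L}_t$ in local coordinates — which are built algebraically from $g^{ij}(t)$, $\partial_k g^{ij}(t)$, $\rho(t)$, $\partial_k\rho(t)$ and $\partial_k\eta$ — depend analytically on $t$, together with the uniform ellipticity that holds for $|t|$ small by continuity. Granting this, Kato's theorem (Rellich's theorem; see \cite[Ch. VII, Thm. 3.9]{Kato}) applies: since $\lambda$ is an eigenvalue of $\widetilde{L}_0 = L_g$ of finite multiplicity $m$ (the $\eta$-Laplacian on a compact manifold has discrete spectrum), there exist $\varepsilon>0$, real analytic functions $\lambda_1(t),\dots,\lambda_m(t)$ and real analytic $\mathcal H$-valued functions $\psi_1(t),\dots,\psi_m(t)$, defined for $|t|<\varepsilon$, such that $\widetilde{L}_t\psi_i(t) = \lambda_i(t)\psi_i(t)$, $\lambda_i(0)=\lambda$, and $\{\psi_i(t)\}$ is orthonormal in $\mathcal H$ for each $t$.

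Finally I would transfer this back: set $\phi_i(t) := U_t^{-1}\psi_i(t) = \rho(t)^{-1/2}\psi_i(t)$. Then $\phi_i(t)$ varies analytically in $t$ (as $\rho(t)^{-1/2}$ is analytic and positive), the eigenvalue equation $L_{g(t)}\phi_i(t) = \lambda_i(t)\phi_i(t)$ holds by construction, $\lambda_i(0)=\lambda$, and since $U_t$ is unitary the orthonormality of $\{\psi_i(t)\}$ in $\mathcal H$ is exactly the orthonormality of $\{\phi_i(t)\}$ in $L^2(M,\dm_t)$. This gives conclusions (1)--(3). One small point to record is elliptic regularity: a priori the $\phi_i(t)$ are $H^2$-functions, but since the coefficients of $L_{g(t)}$ are smooth in the space variable, bootstrapping shows $\phi_i(t)\in C^\infty(M)$, so the statement is meaningful as written.

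The main obstacle is really the bookkeeping in the first two steps — making the unitary conjugation explicit and checking honestly that $\widetilde L_t$ is analytic of type (A), since everything after that is a citation of Rellich--Kato. The same argument applies verbatim to the family $\bar L_t$ of \eqref{Lbarra} with $\eta = \eta(t)$ analytic in $t$, which is what will be needed for Proposition \ref{pro difeo}.
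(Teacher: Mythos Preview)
Your proposal is correct and follows essentially the same route as the paper: both conjugate $L_{g(t)}$ by the multiplication operator coming from the density $\rho(t)^{1/2}$ (the paper's isometry $P$ is exactly your $U_t^{-1}$) to obtain a self-adjoint family of type (A) on the fixed Hilbert space $L^2(M,\dm)$, and then invoke the Rellich--Kato theorem (\cite[Ch.\ VII, Thm.\ 3.9]{Kato}). Your added remarks on elliptic regularity and on the extension to $\bar L_t$ are welcome refinements but do not change the underlying strategy.
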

\begin{proof}
First, let us consider an extension $g(z)$ of $g(t)$ to a domain $D_0$ of the complex plane $\mathbb{C}$. So, we consider the operator $$L_{g(z)}:\mathcal{C}^{\infty}(M;\mathbb{C})\rightarrow \mathcal{C}^{\infty}(M;\mathbb{C}),$$ that in a local coordinate system is given by
\begin{equation}\label{EQ24}
L_{g(z)}f=g^{ij}(z)\left(\frac{\partial^2f}{\partial x_i\partial x_j}-\Gamma_{ij}^k(z)\frac{\partial f}{\partial x_k}-\frac{\partial \eta}{\partial x_i}\frac{\partial f}{\partial x_j}\right)
\end{equation}
for all $f\in \mathcal{C}^{\infty}(M;\mathbb{C})$, with
$$\Gamma_{ij}^k=\frac{1}{2}g^{k\ell}\left(\frac{\partial g_{i\ell}}{\partial x_j}+\frac{\partial g_{j\ell}}{\partial x_i}-\frac{\partial g_{ij}}{\partial x_{\ell}}\right).$$
Now we observe that the domain $D=H^2(M)\cap H_0^1(M)$ of the operator $L_{g(z)}$ is independent of $z$, since $M$ is compact, any two metrics are equivalent. Besides, the application $z\mapsto L_{g(z)}f$ is holomorphic for $z\in D_0$ and for every $f\in D$. Thus, $L_{g(z)}$ is a holomorphic family of type $(A)$ in~\cite{Kato}. Now we need to prove that the operator $L_{g(z)}$ is self-adjoint with fixed inner product, for this purpose, for each $t$, we can construct an isometry
\begin{equation*}
P:L^2(M,\dm)\to L^2(M,\dm_t)
\end{equation*}
taking, for each $u$, $P(u)=\frac{\sqrt[4]{det(g_{ij}(0))}}{\sqrt[4]{det(g_{ij}(t))}}u$. In fact,
\begin{equation*}
\int_{M}P(u)P(v)\dm_{t}=\int_{M}\frac{\sqrt{det(g_{ij}(0))}}{\sqrt{det(g_{ij}(t))}}uv\dm_{t}=\int_{M}uv\dm.
\end{equation*}
Thus, the operator $\tilde{L_{t}}:=P^{-1}\circ L_{t}\circ P$ will have the same eigenvalues of $L_t:H^2(M,\dm_t)\cap H_{0}^{1}(M,\dm_t)\to L^2(M,\dm_t)$.
But the compactness of the $M$ implies that $\tilde{L_{t}}$ is self-adjoint, since
\begin{eqnarray*}
\int_{M}v\tilde{L_{t}}u\dm&\stackrel{(isom.)}{=}&\int_{M}P(v)L_{t}P(u)\dm_{t}=\int_{M}P(u)L_{t}P(v)\dm_{t}\\
&\stackrel{(isom.)}{=}&\int_{M}P^{-1}P(u)P^{-1}L_{t}P(v)\dm=\int_{M}u\tilde{L_{t}}v\dm.
\end{eqnarray*}
Under these conditions we can apply a theorem due to Rellich~\cite{Rellich} or Theorem~3.9 in Kato~\cite{Kato}, to obtain the result of the proposition.
\end{proof}

We observe that the same result of Proposition~\ref{PropExist} holds for $\bar{L}_t$ operator defined in \eqref{Lbarra}. Now, we will derive the first Hadamard type variation formula which generalizes substantially one of Berger's formulas~\cite{berger}.

\begin{proposition}\label{pro bar-L}
Let $(M,g)$ be a compact Riemannian manifold and $g(t)$ be a differentiable variation of the metric $g$, $\phi_i(t)\in C^{\infty}(M)$ a differentiable family of functions and $\lambda(t)$ a differentiable family of real numbers such that $\lambda_i(0)=\lambda$ for each $i=1,\ldots,m$ and for all $t$
$$\left\{
\begin{array}{cccc}
    -\bar{L}_t\phi_i(t) &=& \lambda_i(t)\phi_i(t) & \hbox{in} \,\,\;M\\
    \phi_i(t)&=&0 & \hbox{on}\,\,\; \partial M,
\end{array}
\right.$$
with $\langle\phi_i(t),\phi_j(t)\rangle_{L^2(M,\dm_t)}=\delta_{ij}$. Then the derivative of the $t \mapsto (\lambda_i(t)+\lambda_j(t))\delta_{ij}$ is given by
\begin{equation}\label{Eq-ProBar-L}
(\lambda_i+\lambda_j)'\delta_{ij} = \int_{M}\big\langle\frac{1}{2}L(\phi_i\phi_j)g - 2d\phi_i\otimes d\phi_j, H\big\rangle \dm + \int_{M}\langle\nabla\dot{\eta},\nabla(\phi_{i}\phi_{j})\rangle\dm.
\end{equation}
\end{proposition}

\begin{proof}
We begin by proving the case when $\eta$ does not depend on $t$. Differentiating the equation $-L_{g(t)}\phi_i(t)=\lambda_i(t)\phi_i(t)$, we have at $t=0$, $-L'\phi_i-L\phi'_i=\lambda_i'\phi_i+\lambda_i\phi'_i,$ so
\begin{equation*}
-\int_{M}(\phi_jL'\phi_i+\phi_jL\phi'_i)\dm =\int_{M}(\lambda_i'\phi_i\phi_j+\lambda_j\phi_j\phi_i') \dm = \lambda_i'\int_{M}\phi_j\phi_i\dm-\int_{M}\phi_{i}'L\phi_{j}\dm.
\end{equation*}
Using integration by parts formula and the fact that $\phi_i=0$ on $\partial M$, we obtain
\begin{equation*}
\lambda_i'\delta_{ij}= - \int_{M}\phi_jL'\phi_i \dm.
\end{equation*}
Thus, writing $s_{ij}=(\lambda_i'+\lambda_j')$ we deduce from Lemma~\ref{lem2}
\begin{eqnarray*}
-s_{ij}\delta_{ij} &=& \int_{M}\phi_jL'\phi_i \dm + \int_{M}\phi_iL'\phi_j \dm\\
&=&\int_M(\langle \frac{1}{2}dh-\dv_{\eta}H,\phi_jd\phi_i + \phi_id\phi_j \rangle - \langle H,\phi_j\nabla^2\phi_i + \phi_i\nabla^2\phi_j\rangle)\dm\\
&=&\int_{M}\langle \frac{1}{2}dh ,d(\phi_i\phi_j)\rangle \dm - \int_{M}\phi_j\big(\langle \dv_{\eta}H, d\phi_i\rangle+\langle H,\nabla^2\phi_i\rangle\big) \dm\\
&&-\int_{M}\phi_i\big(\langle \dv_{\eta}H,d\phi_j\rangle+\langle H,\nabla^2\phi_j\rangle\big) \dm.
\end{eqnarray*}
We next use Lemma \ref{lem1} and again integration by parts formula to get
\begin{eqnarray}\label{eqL}
-s_{ij}\delta_{ij} &=& - \int_{M}\frac{h}{2}L(\phi_i\phi_j)\dm + 2 \int_{M} H(\nabla\phi_i,\nabla\phi_j) \dm,
\end{eqnarray}
or equivalently
\begin{equation*}
s_{ij}\delta_{ij} = \int_{M}\big\langle\frac{1}{2}L(\phi_i\phi_j)g - 2d\phi_i\otimes d\phi_j, H\big\rangle \dm.
\end{equation*}
In the general case, we differentiate  the equation $-\bar{L}_t\phi_i(t)=\lambda_i(t)\phi_i(t)$ at  $t=0$, $-\bar{L}'\phi_i- L\phi'_i=\lambda_i'\phi_i+\lambda_i\phi'_i,$ so $-L'\phi_{i}-L\phi_{i}'=\lambda_i'\phi_{i}+\lambda_j\phi_{i}'-\langle\nabla\dot{\eta},\nabla \phi_i\rangle$. Thus, we have that
\begin{equation*}
\lambda_i'\delta_{ij}= - \int_{M}\phi_jL'\phi_i \dm + \int_{M}\phi_{j}\langle\nabla\dot{\eta},\nabla \phi_i\rangle\dm.
\end{equation*}
A calculation analogous to the one above completes the proof.
\end{proof}

Now, we show how to extend for the $\eta$-Laplacian a result by El Soufi and Ilias~\cite[Corollary~2.1]{ahmad}.

\begin{proposition}\label{pro difeo}
Let $(M,g)$ be a Riemannian manifold, $\Omega\subset M$ a bounded domain,  $f_{t}:\Omega\to (M,g)$ a analytic family of diffeomorphisms from $\Omega$ to $\Omega_{t}=f_{t}(\Omega)$, $f_0$ is the identity map and $\lambda$ an eigenvalue of multiplicity $m>1$. Then there exist an analytic family of $m$ functions $\phi_i(t)\in C^{\infty}(\Omega_{t})$ with $\langle\phi_i(t),\phi_j(t)\rangle_{L^2(\Omega_t,\dm)}=\delta_{ij}$ and real numbers $\lambda_i(t)$ with $\lambda_i(0)=\lambda$, such that, for all $t$ and $i=1,\ldots,m$, they are solutions for the Dirichlet problem
$$\left\{\begin{array}{ccccccc}
    -L\phi_i(t) &=& \lambda_i(t)\phi_i(t) & \;\;\Omega_{t}\\
    \phi_i(t) &=&0 & \;\; \partial\Omega_t.
\end{array}\right.$$
Moreover, the derivative of the curve $t \mapsto (\lambda_i(t)+\lambda_j(t))\delta_{ij}$ is given by
\begin{equation}\label{eq-prop3}
(\lambda_i+\lambda_j)'\delta_{ij}=-2\int_{\partial\Omega}\langle V,\nu\rangle\frac{\partial\phi_i}{\partial\nu}\frac{\partial\phi_j}{\partial\nu}\dn,
\end{equation}
where $\langle V, \nu\rangle=g(V, \nu)$ and $V=\frac{d}{dt}\big|_{t=0}f_{t}$.
\end{proposition}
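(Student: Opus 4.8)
The plan is to reduce the moving-domain problem to the fixed-domain setting of Proposition \ref{pro bar-L} by pulling everything back via the diffeomorphisms $f_t$, and then to simplify the resulting bulk integral into a pure boundary integral using the eigenvalue equation and integration by parts. This is the standard Hadamard--Rellich strategy, and the reason the modified operator $\bar{L}_t$ was introduced in \eqref{Lbarra} is precisely so that the pullback of $L$ on $\Omega_t$ becomes an operator of the form $\bar{L}_t$ on the fixed domain $\Omega$.

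First I would set $\psi_i(t) := \phi_i(t)\circ f_t \in C^\infty(\Omega)$, and compute the pullback metric $g(t) := f_t^* g$ on $\Omega$, so that $f_t : (\Omega, g(t)) \to (\Omega_t, g)$ is an isometry. Under this pullback the Laplace--Beltrami operator transforms naturally, $\Delta_{g(t)}(\phi\circ f_t) = (\Delta_g \phi)\circ f_t$, but the weight $e^{-\eta}\,dV_g$ pulls back to $e^{-\eta\circ f_t}(\det Df_t)\,dV_{g(t)}$; absorbing the Jacobian into the weight means one works with the function $\eta(t) := \eta\circ f_t + \log\det Df_t$ (or the analogous expression matching the paper's conventions), which depends on $t$ — hence the need for $\bar{L}_t$ rather than a fixed $L$. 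The Dirichlet condition is preserved since $f_t(\partial\Omega) = \partial\Omega_t$, and the $L^2$-orthonormality $\langle\phi_i(t),\phi_j(t)\rangle_{L^2(\Omega_t,\dm)} = \delta_{ij}$ becomes $\langle\psi_i(t),\psi_j(t)\rangle_{L^2(\Omega,\dm_t)} = \delta_{ij}$. Existence and analyticity of the $\psi_i(t)$, $\lambda_i(t)$ then follow from Proposition \ref{PropExist} applied on the fixed domain $\Omega$ with the analytic family $g(t)$, and transferring back gives the $\phi_i(t)$ on $\Omega_t$ solving the stated Dirichlet problem.

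Next I would apply Proposition \ref{pro bar-L} to the family $\psi_i(t)$ on $(\Omega, g(t))$, which yields
\begin{equation*}
(\lambda_i+\lambda_j)'\delta_{ij} = \int_{\Omega}\big\langle \tfrac{1}{2}L(\psi_i\psi_j)g - 2\,d\psi_i\otimes d\psi_j,\, H\big\rangle\,\dm + \int_{\Omega}\langle\nabla\dot\eta,\nabla(\psi_i\psi_j)\rangle\,\dm,
\end{equation*}
where now $H = \frac{d}{dt}\big|_{t=0} f_t^* g$ is the Lie derivative $\mathcal{L}_V g$ of $g$ along $V = \frac{d}{dt}\big|_{t=0} f_t$, so $H_{ij} = \nabla_i V_j + \nabla_j V_i$ and $h = \langle H,g\rangle = 2\,\dv_\eta V + 2\,g(\nabla\eta, V)$ (the $\eta$-divergence appearing because of how $\dm$ enters). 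The heart of the computation is then to show that, after substituting $H = \mathcal{L}_V g$, every interior term collapses: integrating by parts repeatedly, using $L\psi_i = -\lambda_i\psi_i$, using that $\psi_i = \psi_j = 0$ on $\partial\Omega$ (so only normal derivatives survive at the boundary), and using the defining relation \eqref{AAS} together with $\dot\eta = \dv V$ (the $t$-derivative of $\log\det Df_t$), all the bulk integrals cancel and one is left with boundary contributions carried by $\langle V,\nu\rangle\,\partial_\nu\psi_i\,\partial_\nu\psi_j$. Since $f_0 = \mathrm{id}$, at $t=0$ we have $\psi_i = \phi_i$ and $\partial_\nu\psi_i = \partial_\nu\phi_i$ on $\partial\Omega = \partial\Omega_0$, giving exactly \eqref{eq-prop3}.

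The main obstacle is the bookkeeping in this last step: one must carefully track how the weighted measure and the $t$-dependent $\eta(t)$ interact with the divergence-type identities of Lemma \ref{lem1}, and verify that the extra $\langle\nabla\dot\eta,\nabla(\psi_i\psi_j)\rangle$ term from Proposition \ref{pro bar-L} is precisely what is needed to cancel the Jacobian-generated pieces of $h$ and $\dv_\eta H$, so that the final answer contains no trace of $\eta$ at all — consistent with the known ($\eta\equiv 0$) formula of Soufi--Ilias. A clean way to organize it is to first establish the pointwise identity $\tfrac12 L(\psi_i\psi_j)g - 2\,d\psi_i\otimes d\psi_j : \mathcal{L}_V g = \dv_\eta(\,\text{something involving } V, \psi_i, \psi_j\,) + (\text{terms that integrate against }\dm\text{ to the other integral})$, reducing the whole proposition to the divergence theorem $\int_\Omega \dv_\eta X\,\dm = \int_{\partial\Omega} g(X,\nu)\,\dn$ plus the vanishing of tangential derivatives on $\partial\Omega$.
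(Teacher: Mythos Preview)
Your overall strategy---pull back by $f_t$ to the fixed domain $\Omega$, apply Proposition~\ref{pro bar-L} with $g(t)=f_t^*g$ and $H=\mathcal{L}_Vg$, then reduce the bulk integrals to a boundary term---is exactly the paper's approach, and your remark that this is the very reason $\bar L_t$ was introduced is on point.

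However, there is a concrete error in your identification of the pulled-back weight. You write that $e^{-\eta}\,dV_g$ pulls back to $e^{-\eta\circ f_t}(\det Df_t)\,dV_{g(t)}$ and hence set $\eta(t)=\eta\circ f_t+\log\det Df_t$. This is wrong: since $g(t)=f_t^*g$, the map $f_t:(\Omega,g(t))\to(\Omega_t,g)$ is an isometry, so $f_t^*(dV_g)=dV_{g(t)}$ \emph{exactly}, with no extra Jacobian factor. Therefore $f_t^*(\dm)=e^{-\eta\circ f_t}\,dV_{g(t)}$ and the correct choice is simply
\[
\eta(t)=\eta\circ f_t,\qquad \dot\eta=\frac{d}{dt}\Big|_{t=0}\eta\circ f_t=d\eta(V)=\langle\nabla\eta,V\rangle,
\]
not $\dot\eta=\dv V$ as you claim. (Even with your own formula for $\eta(t)$, its $t$-derivative would be $\langle\nabla\eta,V\rangle+\dv V$, not $\dv V$ alone.)

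This matters for the cancellation you anticipate. In the paper the bulk computation terminates at
\[
a_{ij}=-\int_{\partial\Omega}\langle V,\nu\rangle\frac{\partial\phi_i}{\partial\nu}\frac{\partial\phi_j}{\partial\nu}\,\dn
+\int_{\Omega}\bigl(\langle\nabla\phi_i,\nabla\phi_j\rangle-\lambda\phi_i\phi_j\bigr)\langle\nabla\eta,V\rangle\,\dm
+\frac12\int_{\Omega}\langle\nabla\dot\eta,\nabla(\phi_i\phi_j)\rangle\,\dm,
\]
and it is precisely the identity $\dot\eta=\langle\nabla\eta,V\rangle$, together with $L(\phi_i\phi_j)=-2\lambda\phi_i\phi_j+2\langle\nabla\phi_i,\nabla\phi_j\rangle$ and integration by parts, that makes the last two integrals cancel. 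With $\dot\eta=\dv V$ they would not, and spurious interior terms would survive. Once you correct $\eta(t)$ and $\dot\eta$, your outline coincides with the paper's proof.
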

\begin{proof}
We consider the family of metrics $g(t)=f_{t}^*g$ on $\Omega$. So, we can apply Proposition~\ref{PropExist} for $\bar{L}_t$. Considering
\begin{equation*}
\bar{L}_t(\phi_i(t)\circ f_t):=\Delta_t(\phi_i(t)\circ f_t)-g(t)(\nabla(\eta\circ f_t),\nabla(\phi_i(t)\circ f_t)),
\end{equation*}
we obtain
\begin{eqnarray*}
\bar{L}_t(\phi_i(t)\circ f_t)(p)=-\lambda_i(t)\phi_i(t)\circ f_t(p).
\end{eqnarray*}
For $\bar\phi_i(t)=\phi_i(t)\circ f_t$, we have $\forall t,$ $\langle\bar\phi_i(t),\bar\phi_j(t)\rangle_{L^2(\Omega,\dm_t)}=\delta_{ij}$ and
$$\left\{
\begin{array}{cccc}
-\bar{L}_t\bar\phi_i(t) &=& \lambda_i(t)\bar\phi_i(t) &\hbox{in}\,\, \Omega\\
\bar\phi_i(t) &=& 0 & \hbox{on}\,\, \partial\Omega.
\end{array}\right.$$
Since $\phi_{i}\circ f_0=\phi_i$ and $\eta(t)=\eta\circ f_t$, we have by Proposition~\ref{pro bar-L}
\begin{eqnarray*}
s_{ij}\delta_{ij} &=& \int_{\Omega}\frac{h}{2}L(\phi_i\phi_j)\dm - 2\int_{\Omega}H(\nabla\phi_i,\nabla\phi_j)\dm +\int_{\Omega}\langle\nabla\dot{\eta},\nabla(\phi_{i}\phi_{j})\rangle\dm.
\end{eqnarray*}
Recall that,  $H=\frac{d}{dt}\big|_{t=0}f_t^*g=\mathcal{L}_Vg$ where $V=\frac{d}{dt}\big|_{t=0}f_t$. Then
\begin{eqnarray*}
s_{ij}\delta_{ij}&=&\int_{\Omega}\frac{1}{2}L(\phi_i\phi_j)\langle g,H\rangle \dm-2\int_{\Omega}\Big(\frac{d}{dt}\Big|_{t=0}f_t^*g\Big)(\nabla\phi_i,\nabla\phi_j)\dm\\ &&+\int_{\Omega}\langle\nabla\dot{\eta},\nabla(\phi_{i}\phi_{j})\rangle\dm\\
&=&\int_{\Omega}L(\phi_i\phi_j)\dv V \dm-2\int_{\Omega}\langle\nabla_{\nabla\phi_i}V,\nabla\phi_j\rangle \dm\\
&& -2\int_{\Omega}\langle\nabla_{\nabla\phi_j}V,\nabla\phi_i\rangle \dm + \int_{\Omega}\langle\nabla\dot{\eta},\nabla(\phi_{i}\phi_{j})\rangle\dm.
\end{eqnarray*}
But,
\begin{eqnarray*}
\langle\nabla_{\nabla\phi_{i}}V, \nabla\phi_{j}\rangle = \dv_{\eta}(\langle V,\nabla\phi_{j}\rangle\nabla\phi_{i})+\lambda\langle V,\nabla\phi_{j}\rangle\phi_{i}-\nabla^{2}\phi_{j}(V,\nabla\phi_{i}).
\end{eqnarray*}
Since $\lambda=\lambda_i(0)=\lambda_j(0)$ and $\frac{s_{ij}}{2}\delta_{ij}=a_{ij}$ we have
\begin{eqnarray*}
a_{ij}&=& - \lambda\int_{\Omega}\phi_{i}\phi_{j}\dv V \dm+\int_{\Omega}\langle\nabla\phi_{i},\nabla\phi_{j}\rangle \dv V \dm-\int_{\Omega}\dv_{\eta}(\langle V,\nabla\phi_{j}\rangle\nabla\phi_{i})\dm\\
&&-\lambda\int_{\Omega}\langle V,\nabla\phi_{j}\rangle\phi_{i} \dm+\int_{\Omega}\nabla^{2}\phi_{j}(V,\nabla\phi_{i})\dm-\int_{\Omega}\dv_{\eta}(\langle V,\nabla\phi_{i}\rangle\nabla\phi_{j})\dm\\
&&- \lambda\int_{\Omega}\langle V,\nabla\phi_{i}\rangle \phi_{j}\dm+\int_{\Omega}\nabla^{2}\phi_{i}(V,\nabla\phi_{j})\dm+ \frac{1}{2}\int_{\Omega}\langle\nabla\dot{\eta},\nabla(\phi_{i}\phi_{j})\rangle\dm\\
&=&-\lambda\int_{\Omega}\Big(\phi_{i}\phi_{j}\dv V +\langle V,\nabla (\phi_{i}\phi_{j})\rangle\Big)\dm -\int_{\partial\Omega}\langle V,\nabla\phi_{j}\rangle\langle\nabla\phi_{i},\nu\rangle \dn\\
&&-\int_{\partial\Omega}\langle V,\nabla\phi_{i}\rangle\langle\nabla\phi_{j},\nu\rangle \dn + \int_{\Omega}\langle\nabla\phi_{i},\nabla\phi_{j}\rangle \dv V \dm +\int_{\Omega}\nabla^{2}\phi_{j}(V,\nabla\phi_{i})\dm\\
&&+\int_{\Omega}\nabla^{2}\phi_{i}(V,\nabla\phi_{j})\dm+ \frac{1}{2}\int_{\Omega}\langle\nabla\dot{\eta},\nabla(\phi_{i}\phi_{j})\rangle\dm.
\end{eqnarray*}
As $\phi_i=0 $ on $ \partial\Omega$, we have $\nabla\phi_i=\langle\nabla\phi_i,\nu\rangle\nu=\frac{\partial\phi_i}{\partial\nu}\nu$ on $\partial\Omega.$ Moreover,
\begin{eqnarray*}
\dv_{\eta}(\langle \nabla\phi_i,\nabla\phi_j \rangle V)+\langle \nabla\phi_i,\nabla\phi_j \rangle\langle\nabla\eta,V\rangle
&=&\dv(\langle \nabla\phi_i,\nabla\phi_j \rangle V)\\
&=&\langle\nabla\phi_{i},\nabla\phi_{j}\rangle \dv V+\langle\nabla\langle\nabla\phi_{i},\nabla\phi_{j}\rangle,V\rangle\\
&=&\langle\nabla\phi_{i},\nabla\phi_{j}\rangle \dv V+\nabla^{2}\phi_{j}(V,\nabla\phi_{i})\\
&&+\nabla^{2}\phi_{i}(V,\nabla\phi_{j}).
\end{eqnarray*}
So,
\begin{eqnarray*}
a_{ij}&=&-\lambda\int_{\Omega}\dv(\phi_{i}\phi_{j}V)\dm-2\int_{\partial\Omega}\langle V,\nu\rangle\frac{\partial\phi_{i}}{\partial\nu}\frac{\partial\phi_{j}}{\partial\nu}\dn+\int_{\Omega}\dv_{\eta}(\langle\nabla\phi_{i},\nabla\phi_{j}\rangle V)\dm\\
&&+\int_{\Omega}\langle\nabla\phi_{i},\nabla\phi_{j}\rangle\langle\nabla\eta,V\rangle \dm + \frac{1}{2}\int_{\Omega}\langle\nabla\dot{\eta},\nabla(\phi_{i}\phi_{j})\rangle\dm.
\end{eqnarray*}
It follows that
\begin{eqnarray*}
a_{ij}&=&-\lambda\int_{\Omega}\dv(\phi_{i}\phi_{j}V)\dm-2\int_{\partial\Omega}\langle V,\nu\rangle\frac{\partial\phi_{i}}{\partial\nu}\frac{\partial\phi_{j}}{\partial\nu}\dn+\int_{\partial\Omega}\langle V,\nu\rangle\frac{\partial\phi_{i}}{\partial\nu}\frac{\partial\phi_{j}}{\partial\nu}\dn\\
&&+\int_{\Omega}\langle\nabla\phi_{i},\nabla\phi_{j}\rangle\langle\nabla\eta,V\rangle \dm+ \frac{1}{2}\int_{\Omega}\langle\nabla\dot{\eta},\nabla(\phi_{i}\phi_{j})\rangle\dm\\
&=&-\int_{\partial\Omega}\langle V,\nu\rangle\frac{\partial\phi_{i}}{\partial\nu}\frac{\partial\phi_{j}}{\partial\nu}\dn-\lambda\int_{\Omega}\dv(\phi_{i}\phi_{j} V)\dm\\
&&+\int_{\Omega}\langle\nabla\phi_{i},\nabla\phi_{j}\rangle\langle\nabla\eta,V\rangle \dm+ \frac{1}{2}\int_{\Omega}\langle\nabla\dot{\eta},\nabla(\phi_{i}\phi_{j})\rangle\dm.
\end{eqnarray*}
On the other hand,
\begin{eqnarray*}
0=\int_{\Omega}\dv_{\eta}(\phi_{i}\phi_{j}V)\dm = \int_{\Omega}\dv(\phi_{i}\phi_{j}V)\dm-\int_{\Omega}\phi_{i}\phi_{j}\langle\nabla\eta,V\rangle \dm.
\end{eqnarray*}
Hence
\begin{eqnarray}\label{eq-a_ij}
 a_{ij}&=&-\int_{\partial\Omega}\langle V,\nu\rangle\frac{\partial\phi_i}{\partial\nu}\frac{\partial\phi_j}{\partial\nu}\dn\\
\nonumber&&+\int_{\Omega}\big(\langle\nabla\phi_{i},\nabla\phi_{j}\rangle-\lambda\phi_{i}\phi_{j}\big)\langle\nabla\eta,V\rangle\dm + \frac{1}{2}\int_{\Omega}\langle\nabla\dot{\eta},\nabla(\phi_{i}\phi_{j})\rangle\dm.
\end{eqnarray}
Since $\eta(t,p)=\eta\circ f(t,p)$ we have
\begin{equation*}\dot{\eta}=\frac{d}{dt}\Big|_{t=0}\eta(t,p)=\frac{d}{dt}\Big|_{t=0}(\eta\circ f)(t,p)=d\eta\Big|_{f(0,p)}\cdot\frac{d}{dt}\Big|_{t=0}f_t(p)=d\eta\Big|_p(V)=\langle\nabla\eta,V\rangle.
\end{equation*}
We next use that $\lambda_i(0)=\lambda_j(0)=\lambda$, $L(\phi_i\phi_j)=\phi_iL\phi_j +\phi_jL\phi_i+2\langle\nabla\phi_i,\nabla\phi_j\rangle$ and integration by parts formula to calculate
\begin{eqnarray*}
\frac{1}{2}\int_{\Omega}\langle\nabla\dot{\eta},\nabla(\phi_{i}\phi_{j})\rangle\dm &=&-\frac{1}{2}\int_{\Omega}\dot{\eta}L(\phi_i\phi_j)\dm+\frac{1}{2}\int_{\partial\Omega}\dot{\eta}\langle\nu,\nabla(\phi_i\phi_j)\rangle\dn\\
&=&\int_{\Omega}\langle\nabla\eta,V\rangle\big(\lambda\phi_i\phi_j-\langle\nabla\phi_i,\nabla\phi_j\rangle\big)\dm.
\end{eqnarray*}
This computation tells us that the last two terms in \eqref{eq-a_ij} cancel each other, which concludes the proof of the proposition.
\end{proof}

\section{Applications}

In this section, we concentrate on the applications of the Hadamard type formulas. We first prove the following:
\begin{proposition}\label{prop-A}
Let $(M,g_0)$ be a compact Riemannian manifold and $\lambda$ an eigenvalue of $L_{g_0}$ for the Dirichlet problem with multiplicity $m>1$. Take the positive number $\epsilon_{\lambda,g_0}$ and the neighborhood $\mathcal{V}_{\epsilon}$ of $g_0$ in $\mathcal{M}^r$ as in~\eqref{kato-continuity}. Then for each open neighborhood $\mathcal{U}\subset \mathcal{V}_{\epsilon}$ there is $g\in\mathcal{U}$ such that all eigenvalues $\lambda(g)$ of $L_g$ with $|\lambda(g)-\lambda|<\epsilon_{\lambda,g_0}$ are simple.
\end{proposition}
\begin{proof}
We argue by contradiction. Suppose that there is an open neighborhood $\mathcal{U}\subset \mathcal{V}_\epsilon$ of $g_0$ such that for all $g\in \mathcal{U}$ the eigenvalue $\lambda(g)$ of $L_{g}$ with $|\lambda(g)-\lambda|< \epsilon_{\lambda,g_0}$ has multiplicity $m>1$. In this case, for any symmetric $(0,2)$--tensor $T$ on $(M,g)$ the perturbation $g(t)=g+tT$ fails to split the eigenvalue $\lambda$. The eigenvalue curves $\lambda(t)$ satisfy
$$\left\{
\begin{array}{ccccccc}
-L_{g(t)}\phi_i(t)&=& \lambda(t)\phi_i(t) &\hbox{in} \,\,M\\
\phi_i(t)&=&0 &\hbox{on}\,\, \partial M.
\end{array}
\right.$$
Since $H=\frac{d}{dt}g(t)=T$ and $L=L_g$, by Proposition~\ref{pro bar-L} we have
\begin{equation}
\lambda'\delta_{ij} = \int_{M}\big\langle\frac{1}{4}L(\phi_i\phi_j)g - d\phi_i\otimes d\phi_j, T\big\rangle \dm.
\end{equation}
Now, considering the symmetrization tensor $S_{ij}=\frac{d\phi_i\otimes d\phi_j + d\phi_j\otimes d\phi_i}{2}$ and using the fact that
\begin{equation*}
\langle d\phi_i\otimes d\phi_j, T\rangle=\langle d\phi_j\otimes d\phi_i,T \rangle
\end{equation*}
we deduce the next identity
\begin{equation}\label{EQ_derivative}
\lambda'\delta_{ij} = \int_{M}\big\langle\frac{1}{4}L(\phi_i\phi_j)g - S_{ij}, T\big\rangle \dm.
\end{equation}
If $i\neq j$, we have
\begin{equation}\label{eq S}
\frac{1}{4}L(\phi_{i}\phi_{j})g= S_{ij}.
\end{equation}
Furthermore, taking the trace in equation \eqref{eq S}, we have
\begin{eqnarray}\label{eq-aux thm1}
\nonumber g(\nabla\phi_{i},\nabla\phi_{j}) &=& \frac{n}{4}L(\phi_{i}\phi_{j})=\frac{n}{4}(\phi_{i}L\phi_{j}+\phi_{j}L\phi_{i}+2g(\nabla\phi_{i},\nabla\phi_{j}))\\
&=&\frac{n}{2}(-\lambda\phi_i\phi_j + g(\nabla\phi_i,\nabla\phi_j)).
\end{eqnarray}
For $n\neq2$ we can write
\begin{equation}
\frac{n\lambda}{n-2}\phi_{i}\phi_{j}=g(\nabla\phi_{i},\nabla\phi_{j}).
\end{equation}
Fixing $p\in M$ we consider an integral curve $\alpha$ in $M$ such that $\alpha(0)=p$ and $\alpha'(s)=\nabla\phi_i(\alpha(s))$. Defining $\beta(s):=\phi_j(\alpha(s))$, we compute
\begin{eqnarray*}
\beta'(s)&=&\langle \nabla\phi_j(\alpha(s)),\alpha'(s)\rangle =g(\nabla\phi_j,\nabla\phi_i)(\alpha(s))=\frac{n\lambda}{n-2}\phi_{i}\phi_{j}(\alpha(s))\\
&=&\frac{n\lambda}{n-2}\phi_{i}(\alpha(s))\beta(s),
\end{eqnarray*}
which is a contradiction, since $M$ is compact. For the case $n=2$, we have from equation~\eqref{eq-aux thm1} that $\phi_i\phi_j=0$. Then, it follows from the principle of the unique continuation~\cite{Romander} that at least one of the eigenfunctions vanishes, which is again a contradiction. Therefore, we complete the proof of the proposition.
\end{proof}

\begin{proposition}\label{prop-B}
Let $(M,g)$ be a Riemannian manifold and $\Omega$ a bounded domain in $M$. Let $\lambda$ be an eigenvalue of $L_g$ for the Dirichlet problem with multiplicity $m>1$. Take the positive number $\epsilon_{\lambda,\Omega}$ and the neighborhood $\mathcal{V}_{\epsilon}$ of the identity in $D^r(\Omega)$ as in~\eqref{kato-continuity}. Then for any open neighborhood $\mathcal{U}\subset \mathcal{V}_{\epsilon}$, there is a diffeomorphism $f$ such that all eigenvalues $\lambda(f)$ with $|\lambda(f)-\lambda|< \epsilon_{\lambda,\Omega}$ are simple.
\end{proposition}
\begin{proof}
We also argue by contradiction. Suppose that there is an open neighborhood $\mathcal{U}\subset \mathcal{V}_\epsilon$ of the identity such that for all $f\in \mathcal{U}$ the eigenvalue $\lambda(f)$ of $L_{g}$ with $|\lambda(f)-\lambda|< \epsilon_{\lambda,\Omega}$ has multiplicity $m>1$. Then, it follows from~\eqref{eq-prop3} that $\frac{\partial\phi_i}{\partial \nu}\frac{\partial\phi_j}{\partial \nu} = 0$ on $\partial\Omega$. This way, we have either $\frac{\partial\phi_i}{\partial \nu} = 0$ or $\frac{\partial\phi_j}{\partial \nu} = 0 $ in some open set $U$ of $\partial \Omega$. If $\frac{\partial\phi_i}{\partial \nu} = 0$ in $U$, since $\phi_i = 0 $ on $\partial \Omega$, it follows from the unique continuation principle \cite{Romander} that $\phi_i = 0$ on $\Omega$, which is a contradiction.
\end{proof}

\subsection{Proof of Theorem~\ref{thm-A}}
\begin{proof}
Let $\mathcal{C}_m$ be the set of metrics in $\mathcal{M}^r$ such that the first $m$ eigenvalues of $L_g$ are simple. It is known that if these eigenvalues depend continuously on the metric (see~\cite{BU}), then for each $m$ the set $\mathcal{C}_m$ is open in $\mathcal{M}^r$. On the other hand, it follows from Proposition~\ref{prop-A} that the set  $\mathcal{C}_m$ is dense in $\mathcal{M}^r$. Since $\mathcal{M}^r$ is a complete metric space in the $C^r$~topology the set $\Gamma = \cap_{m=1}^{\infty}\mathcal{C}_m$ is dense, which proves this theorem.
\end{proof}

\subsection{Proof of Theorem~\ref{thm-B}}
\begin{proof}
Since $D^r(\Omega)$ is an affine manifold in a Banach space, similar arguments as above allow us to obtain Theorem~\ref{thm-B}.
\end{proof}

\subsection{Proof of Theorem~\ref{thm-f-A}}
\begin{proof}
The proof follows from the analogous steps for the variation of metrics case. We shall present a brief sketch of the last claim. Indeed, the main tool is to show a proposition analogous to Proposition~\ref{prop-A} for $\eta$-variation case. For this, first note that from equation~\eqref{Eq-ProBar-L} we get $(\lambda_i+\lambda_j)'\delta_{ij} = \int_{M}\langle\nabla\dot{\eta},\nabla(\phi_{i}\phi_{j})\rangle\dm$. Second, by using integration by parts formula, we obtain $2\lambda'\delta_{ij}= -\int_{M}\dot{\eta}L_\eta(\phi_i\phi_j)\dm$, since $\lambda_i=\lambda_j=\lambda$. Now, we argue as in the proof of the Proposition~\ref{prop-A} to get a contradiction, namely, the integral $\int_{M}\dot{\eta}L_\eta(\phi_i\phi_j)\dm=0$ for all $\dot{\eta}\in\mathcal{F}^r$. This is equivalent to $g(\nabla\phi_i,\nabla\phi_j) -\lambda\phi_i\phi_j=0$, but nontrivial eigenfunctions cannot satisfy it. Finally, we can proceed as in the proof of Theorem~\ref{thm-A}, and this completes our sketch.
\end{proof}

\subsection{Proof of Theorem~\ref{thm-f}}
\begin{proof}
Following the Teytel's approach as in Gomes and Marrocos~\cite[Section~$5$]{Gomes-Marrocos}, we define the linear functionals
\begin{equation*}
f_{ij}(\dot{\eta})=\int_{M}\phi_iL'_{\eta}\phi_j\dm,
\end{equation*}
where $\dot{\eta}\in \mathcal{F}^r$ and $L'_{\eta}=\frac{d}{dt}|_{t=0}L_{\eta(t)}=g(\nabla\dot{\eta},\nabla\cdot)$.

In order to prove Items $(1)$ and $(2)$ it is enough to verify that there exist two orthonormal eigenfunctions $\phi_1$ and $\phi_2$ associated to $\lambda$ such that the functionals $f_{11}-f_{22}$ and $f_{12}$ are linearly independent, see \cite[Remark~2]{Gomes-Marrocos} for details. However, we prove a more stronger condition, namely $f_{11}, f_{12}, f_{22}$ are linearly independent, so that we can apply the Implicit Function Theorem as in the proof of Theorem~$1.1$ in~\cite{teytel} to get Item~$(3)$ as well.

First of all, we use integration by parts formula to obtain
\begin{equation*}
f_{ij}(\dot\eta)= - \int_{M}\dot{\eta}L_\eta(\phi_i\phi_j)\dm.
\end{equation*}
So,
\begin{eqnarray*}
0=\alpha f_{11} +\beta f_{12} + \gamma f_{22} = \int_{M}\dot{\eta}(\alpha L_\eta(\phi_1^2)+\beta L_\eta(\phi_1\phi_2)+\gamma L_\eta(\phi_2^2))\dm.
\end{eqnarray*}
Whence, we conclude that
\begin{eqnarray*}
\alpha(|\nabla \phi_1|^2-\lambda\phi_1^2)+\beta g(\nabla\phi_1,\nabla\phi_2) -\lambda\phi_1\phi_2)+ \gamma(|\nabla \phi_2|^2-\lambda\phi_2^2)=0.
\end{eqnarray*}
Now, we can proceed as in~\cite[Subsection~$5.1$]{Gomes-Marrocos} to complete the proof of the theorem.
\end{proof}

\vskip .1cm
\noindent\textbf{Acknowledgements:}
The authors would like to express their sincere thanks to Dragomir M. Tsonev for useful comments, discussions and constant encouragement as well as to the Department of Mathematics at Lehigh University where part of this work was carried out. The first and second authors are grateful to Huai-Dong Cao and Mary Ann for their warm hospitality and constant encouragement. We should also like to acknowledge all the anonymous referees who tacitly contributed to the improvement of this work.


\begin{thebibliography}{99}

\bibitem{A} J.H. Albert, Generic properties of eigenfunctions of elliptic partial differential operators, Trans. Amer. Math. Soc. 238 (1978) 341-354.

\bibitem{Arnold} V.I. Arnold, Modes and quasimodes. (Russian) Funkcional. Anal. i Prilo$\check{z}$en. 6 (2) (1972) 12-20.

\bibitem{BU} S. Bando and H. Urakawa, Generic properties of the eigenvalue of the laplacian for compact Riemannian manifolds, Tohoku Math. J. 35 (1983) 155-172.

\bibitem{berger} M. Berger, Sur les premi\`eres valeurs propres des vari\'et\'es Riemanniennes, Compos. Math. 26 (1973) 129-149.

\bibitem{delfour} M.C. Delfour and J.-P. Zolesio, Shapes and Geometries: Analysis, Differential Calculus, and Optimization, Society for Industrial and Applied Mathematics, 2001.
\bibitem{Gomes-Marrocos} J.N.V. Gomes and M.A.M. Marrocos, On eigenvalue generic properties of the Laplace-Neumann operator, J. Geom. Phys. 135 (2019) 21-31.

\bibitem{Gomes-Marrocos2} J.N.V. Gomes M.A.M. Marrocos, Generic Spectrum of Warped Products and G-Manifolds, J. Geom. Anal. (2018), https://doi.org/10.1007/s12220-018-00106-x.

\bibitem{ahmad} A. El Soufi and S. Ilias, Domain deformations and eigenvalues of the dirichlet laplacian in a Riemannian manifold, Illinois J. Math. 51 (2007) 645-666.

\bibitem{henry} D. Henry, Perturbation of the boundary in boundary-value problems of partial differential equation, London Mathematical Society Lecture Note Series, 318, Cambridge University Press, Cambridge, 2005.

\bibitem{Romander} L. H\"ormander, Linear partial differential operators, Springer, New York 1969.

\bibitem{Kato} T. Kato, Perturbation Theory for Linear Operators, Springer, 1980.


\bibitem{pereira} M. Pereira, Generic simplicity of eigenvalues for a Dirichlet problem of the bilaplacian operator, (English summary), Electron. J. Differential Equations 2004, No 114.

\bibitem{Rellich} F. Rellich, St\"orungstheorie der Spektralzerlegung, Math. Ann. 118 (1940) 462-484.

\bibitem{teytel} M. Teytel, How rare are multiple eigenvalues? Comm. Pure Appl. Math. 52 (1999) 917-934.

\bibitem{Uhlenbeck} K. Uhlenbeck, Generic Properties of Eigenfunctions, Amer. J. Math. 98 (4) (1976) 1059-1078.

\bibitem{zelditch} S. Zelditch, On the generic spectrum of a riemannian cover, Ann. Inst. Fourier (Grenoble) 40 (1990) 407-442.
\end{thebibliography}
\end{document}